\theoremstyle{plain}
\newtheorem{thm}{Theorem}[section]
\newtheorem{theorem}{Theorem}[section]
\newtheorem{corollary}[thm]{Corollary}
\newtheorem{lemma}[thm]{Lemma}
\theoremstyle{definition}
\newtheorem{defn}{Definition}[section]
\newtheorem{definition}{Definition}[section]
\newtheorem{example}[defn]{Example}
\theoremstyle{remark}
\numberwithin{equation}{section}
\numberwithin{figure}{section}
\DeclareMathOperator{\re}{Re} \DeclareMathOperator{\im}{Im}
\DeclareMathOperator{\Res}{\mathcal{R}}
\DeclareMathOperator*{\res}{\mathrm{Res}}
\def\I{\mathrm{i}}
\def\D{{\mathbb D}}
\def\C{{\mathbb C}}
\begin{document}

\title{The string equation for non-univalent functions
}

\author{
Bj\"orn Gustafsson\textsuperscript{1}
}

\maketitle

\begin{center}
{\it In Memory of Alexander Vasil\'{e}v} 
\end{center}

\begin{abstract}
For conformal maps defined in the unit disk one can define a certain Poisson bracket
that involves the harmonic moments of the image domain.
When this bracket is applied to the conformal map itself together with its conformally reflected map the result is
identically one. This is called the string equation, and it is closely connected to the governing equation, the Polubarinova-Galin equation, for
the evolution of a Hele-Shaw blob of a viscous fluid (or, by another name, Laplacian growth).  In the present paper we investigate
to what extent the string equation makes sense and holds for non-univalent analytic functions.

We give positive answers in two cases:  for polynomials and for a special class of rational functions.

\end{abstract}

\noindent {\it Keywords:} Polubarinova-Galin equation, string equation, Poisson bracket, harmonic moments, branch points, Hele-Shaw flow, Laplacian growth, resultant, quadrature Riemann surface.

\noindent {\it MSC Classification:} 30C55, 31A25, 34M35, 37K05, 76D27.

 \footnotetext[1]
{Department of Mathematics, KTH, 100 44, Stockholm, Sweden.\\
Email: \tt{gbjorn@kth.se}}


\section{Introduction}

This paper is inspired by fifteen years of collaboration with Alexander Vasil'ev. It gives some details related to a talk given at
the conference ``ICAMI 2017 at San Andr\'es Island, Colombia'', November 26 - December 1, 2017,
partly in honor of Alexander Vasil\'ev.

My collaboration with  Alexander Vasil\'ev  started with some specific questions concerning Hele-Shaw flow and evolved over time into various areas of modern mathematical
physics. The governing equation for the Hele-Shaw flow moving boundary problem we were studying is called the Polubarinova-Galin equation,
after the two Russian mathematicians  P.Ya.~Polubarinova-Kochina and L.A.~ Galin who formulated this equation around 1945. Shortly later,
in 1948, U.P.~Vinogradov and P.P.~Kufarev were able to prove local existence of solutions of the appropriate initial value problem, under the necessary
analyticity conditions. 

Much later, around 2000, another group of Russian  mathematicians, or mathematical physicists, lead by 
M.~Mineev-Weinstein, P.~ Wiegmann, A. ~Zabrodin, considered the Hele-Shaw problem from the point of view
of integrable systems, and the corresponding equation then reappears under the name ``string equation''. See for example
\cite{Wiegmann-Zabrodin-2000}, \cite{Mineev-Zabrodin-2001}, \cite{Kostov-Krichever-Mineev-Wiegmann-Zabrodin-2001}, \cite{Krichever-Marshakov-Zabrodin-2005}.
The integrable system approach appears as a consequence of the discovery 1972 by S.~Richardson \cite{Richardson-1972} that the Hele-Shaw problem has
a complete set of conserved quantities, namely the harmonic moments. See \cite{Vasiliev-2009} for the history of the Hele-Shaw problem in general.
It is not clear whether the name ``string equation'' really refers to
string theory, but it is known that the subject as a whole has connections to for example 2D quantum gravity, and hence is at least
indirectly  related to string theory. In any case, these matters have been a source of inspiration for Alexander Vasil\'ev and myself, and in our first book  \cite{Gustafsson-Vasiliev-2006}   
one of the chapters has the title ``Hele-Shaw evolution and strings''.

The string equation is deceptively simple and beautiful. It reads
\begin{equation}\label{string}
\{f,f^*\}=1,
\end{equation}
in terms of a special Poisson bracket referring to harmonic moments  and with $f$ any normalized conformal map from some reference domain, in our case the unit disk, to the fluid domain
for the Hele-Shaw flow. The main question for this paper now is: if such a beautiful equation as (\ref{string}) holds for all univalent functions, shouldn't it  also hold for non-univalent functions?

The answer, which we shall make precise in some special cases, is that the Poisson bracket does not (always) make sense in the non-univalent case, but that one
can  extend its meaning, actually in several different ways, and after such a step the string equation indeed holds.
Thus the problem is not that the string equation is difficult to prove, the problem is that the meaning of the string equation is ambiguous in the non-univalent case. 
The main results in this paper, Theorems~\ref{thm:polynomialstring} and \ref{thm:QRS}, represent two different meanings of the string equation.

The author wants to thank Irina Markina, Olga Vasilieva, Pavel Gumenyuk, Mauricio Godoy Molina,
Erlend Grong and several others for generous invitations in connection with the
mentioned conference ICAMI 2017, and for warm friendship in general.


\section{The string equation for univalent conformal maps}\label{sec:univalent string}

We consider analytic functions $f(\zeta)$ defined in a neighborhood of the closed unit disk and normalized by $f(0)=0$, $f'(0)>0$.
In addition, we always assume that $f'$ has no zeros on the unit circle. 
It will be convenient to write the Taylor expansion around the origin on the form
$$
f(\zeta)= \sum_{j=0}^{\infty}a_{j}\zeta^{j+1} \quad (a_0>0).
$$
If $f$ is univalent it maps $\D=\{\zeta\in\C: |\zeta|<1\}$ onto a domain $\Omega=f(\D)$. The {\it harmonic moments} for this domain are
$$
M_k = \frac{1}{\pi}\int_\Omega z^k {d}x{d}y, \quad k=0,1,2,\dots.
$$
The integral here can be pulled back to the unit disk and pushed to the boundary there. This gives
\begin{equation}\label{Mk}
M_k =\frac{1}{2\pi\I} \int_{ \D} f(\zeta)^k |f'(\zeta)|^2 d\bar{\zeta} d\zeta
=\frac{1}{2\pi\I} \int_{\partial \D} f(\zeta)^k f^*(\zeta)f'(\zeta) d\zeta,
\end{equation}
where
\begin{equation}\label{involution}
f^*(\zeta)=\overline{f(1/\bar{\zeta})}
\end{equation}
denotes the holomorphic reflection of $f$ in the unit circle.
In the form in (\ref{Mk}) the moments make sense also when $f$ is not univalent.

Computing the last integral in (\ref{Mk}) by residues gives  {Richardson's formula} \cite{Richardson-1972}
for the moments:
\begin{equation}\label{Richardson}
M_k= \sum_{(j_1,\dots,j_k)\geq (0,\dots,0)} (j_0+1) a_{j_0}\cdots a_{j_{k}}\bar{ a}_{j_0+\ldots +j_{k}+k},
\end{equation}
This is a highly nonlinear relationship between the coefficients of $f$ and the moments, and even if $f$ is a polynomial of low degree it is virtually impossible to invert it, to
obtain $a_k=a_k(M_0, M_1, \dots)$, as would be desirable in many situations.  Still there is, quite remarkably,  an explicit expressions for the Jacobi determinant of the change
$(a_0,a_1,\dots)\mapsto (M_0,M_1,\dots)$ when  $f$ restricted to the class of polynomials of a fixed degree. 
This  formula, which was proved by to O.~Kuznetsova and V.~Tkachev \cite{Kuznetsova-Tkachev-2004}, \cite{Tkachev-2005} after an initial conjecture of
C.~Ullemar \cite{Ullemar-1980}, will be discussed in some depth below. 

There are examples of different simply connected domains having the same harmonic moments, see for example \cite{Sakai-1978}, \cite{Sakai-1987},
\cite{Zalcman-1987}.  Restricting to domains having analytic boundary the harmonic
moments are however sensitive for at least small variations of the domain. This can easily be proved by potential theoretic methods.
Indeed, arguing on an intuitive level, an infinitesimal perturbation of the boundary can be represented by a signed measure sitting on the boundary
(this measure representing the  speed of infinitesimal motion). The logarithmic potential of  that measure is a continuous function in the complex plane,
and if the harmonic moments were insensitive for the perturbation then
the exterior part of this potential would vanish. At the same time the interior potential is a harmonic function, and the only way all these conditions can be
satisfied is that the potential vanishes identically, hence also that the measure on the boundary vanishes. 
On a more rigorous level, in the polynomial case the above mentioned Jacobi determinant is indeed nonzero.

The conformal map, with its normalization, is uniquely determined by the image domain $\Omega$ and, as indicated above, the domain is locally encoded in the
sequence the moments $M_0, M_1, M_2,\dots$. Thus  the harmonic moments can be viewed as local coordinates in the space of univalent functions, and we may write
$$
f(\zeta)= f(\zeta; M_0, M_1, M_2,\dots).
$$  
In particular, the derivatives ${\partial f}/{\partial M_k}$ make sense. Now we are in position to define the Poisson bracket. 

\begin{definition}\label{def:poisson}
For any two functions $f(\zeta)=f(\zeta; M_0, M_1, M_2,\dots)$, $g(\zeta)=g(\zeta; M_0, M_1, M_2,\dots)$ which are analytic in a neighborhood of the unit circle 
and are parametrized by the moments we define
\begin{equation}\label{poisson}
\{f,g\}=\zeta \frac{\partial f}{\partial \zeta}\frac{\partial g}{\partial M_0}-\zeta  \frac{\partial g}{\partial \zeta}\frac{\partial f}{\partial M_0}.
\end{equation}
This is again a function analytic in a neighborhood of the unit circle and parametrized by the moments.
\end{definition}

The {\it Schwarz function} \cite{Davis-1974}, \cite{Shapiro-1992} of an analytic curve $\Gamma$ is the unique holomorphic function 
defined in a neighborhood of $\Gamma$ and satisfying
$$
S(z)=\bar{z}, \quad z\in \Gamma.
$$
When $\Gamma=f(\partial\D)$, $f$ analytic in a neighborhood of $\partial\D$, the defining property  of $S(z)$ becomes 
\begin{equation}\label{Sff}
S\circ f = f^*,
\end{equation}
holding then identically in a neighborhood of the unit circle. Notice that $f^*$ and $S$ depend on the moments $M_0, M_1, M_2\dots$, like $f$. The string equation asserts that
\begin{equation}\label{string1}
\{f,f^*\}=1
\end{equation}
in a neighborhood of the unit circle, provided $f$ is univalent in a neighborhood of the closed unit disk.
This result was first formulated and proved in \cite{Wiegmann-Zabrodin-2000} for the case of conformal maps onto an exterior  domain (containing the point of infinity). For conformal maps to bounded 
domains proofs based on somewhat different ideas and involving explicitly the Schwarz function were  given in \cite{Gustafsson-2014}, \cite{Gustafsson-Teodorescu-Vasiliev-2014}. 
For convenience we briefly recall the proof below.

Writing (\ref{Sff}) more explicitly as
$$
f^*(\zeta; M_0, M_1,\dots)=S(f(\zeta; M_0, M_1,\dots); M_0,M_1,\dots)
$$
and using the chain rule when computing $\frac{\partial f^*}{\partial M_0}$ gives, after simplification, 
\begin{equation}\label{fffSf}
\{f,f^*\}=\zeta \frac{\partial f}{\partial \zeta}\cdot ( \frac{\partial S}{\partial M_0}\circ f).
\end{equation}
Next one notices that the harmonic moments are exactly the coefficients in the expansion of a certain  Cauchy integral at infinity:
$$
\frac{1}{2\pi\I} \int_{\partial\Omega}\frac{\bar{w}dw}{z-w} = \sum_{k=0}^\infty \frac{M_k}{z^{k+1}} \qquad (|z|>>1).
$$
Combining this with the fact that the jump of this Cauchy integral across $\partial \Omega$ is $\bar{z}$ it follows that $S(z)$
equals the difference between the analytic continuations of the exterior ($z\in\Omega^e$) and interior  ($z\in \Omega$) functions
defined by the Cauchy integral. Therefore
$$
S(z; M_0,M_1,\dots)= \sum_{k=0}^\infty \frac{M_k}{z^{k+1}} + \text{function holomorphic in }\Omega,
$$
and so, since $M_0, M_1,\dots$ are independent variables,
$$
\frac{\partial S}{\partial M_0}(z;M_0, M_1,\dots )=\frac{1}{ z} + \text{function holomorphic in }\Omega.
$$

Inserting this into (\ref{fffSf}) one finds that $\{f,f^*\}$ is holomorphic in $\D$.  Since the Poisson bracket is invariant under holomorphic reflection
in the unit circle it follows that $\{f,f^*\}$ is holomorphic in the exterior of $\D$  (including the point of infinity) as well, hence it must be constant.
And this constant is  found to be one, proving (\ref{string}).

In the forthcoming sections we wish to allow non-univalent analytic functions in the string equation.  Then the basic ideas in the above proof still work,
but what may happen is that $f$ and $S$ are not determined by the moments $M_0, M_1,\dots$ alone. Since $\partial f/\partial M_0$ is a partial
derivative one has to specify all other independent variables in order to give a meaning to it. So there may be more variables, say 
$$
f(\zeta)=f(\zeta; M_0,M_1,\dots; B_1,B_2,\dots).
$$ 
This does not change the proof very much, but the meaning of the string equation depends on the choice of these extra variables.
Natural choices turn out to be locations of  branch points, i.e., one takes $B_j =f(\omega_j)$, where the $\omega_j\in\D$ denote the zeros of $f'$ inside $\D$. 
One good thing with choosing the branch points as additional variables is that keeping these fixed, as is implicit then in the notation $\partial/\partial M_0$,
means that $f$ in this case can be viewed as a conformal map into a fixed Riemann surface, which will be a branched covering over the complex plane.

There are also other possibilities for giving a meaning to the string equation, like restricting $f$ to the class of polynomials of a fixed degree. 
But then one must allow the branch points to move, so this gives a different meaning to $\partial/\partial M_0$.


\section{Intuition and physical interpretation in the non-univalent case}\label{sec:non-univalent}

As indicated above we shall consider also non-univalent analytic functions as conformal maps,  then into Riemann surfaces above $\C$. 
In general these Riemann surfaces will be  branched covering surfaces, and the non-univalence is then absorbed in the covering projection. 
It is easy to understand that such a Riemann surface, or the corresponding conformal map, will in general 
not be determined by the moments $M_0, M_1, M_2, \dots$ alone.

As a simple example, consider an oriented curve $\Gamma$ in the complex plane
encircling the origin twice (say). In terms of the winding number, or {\it index},
\begin{equation}\label{index}
\nu_\Gamma (z)=\frac{1}{2\pi\I} \oint_{\Gamma} \frac{d\zeta}{\zeta -z} \quad (z\in\C\setminus\Gamma),
\end{equation}
this means that $\nu_\Gamma (0)=2$. Points far away from the origin have index zero, and some other points may have  index one (for example). 
Having only the curve $\Gamma$ available it is natural to define the harmonic moments for the multiply covered (with multiplicities $\nu_\Gamma$) set inside $\Gamma$ as
$$
M_k=\frac{1}{\pi} \int_\C z^k \nu_\Gamma (z) dxdy= \frac{1}{2\pi\I}\int_\Gamma z^k \bar{z}dz, \quad k=0,1,2,\dots.
$$
It is tempting to think of this integer weighted set as a Riemann  surface over (part of) the complex plane.
However, without further information this is not possible. Indeed, since some points have index $\geq 2$ such a covering surface will
have to have branch points, and these have to be specified in order to make the set into a Riemann surface. And only after that it is possible to
speak about a conformal map $f$. Thus $f$ is in general not determined by the moments alone. In the simplest non-univalent cases $f$ will be (locally)
determined by the harmonic moments together with the location of the branch points.

There are actually more problems in the non-univalent case. Even if we specify all branch points, the test class of functions $1, z, z^2,\dots$ used in defining
the moments may be too small since each of these functions take the same value on all sheets above any given point in the complex plane. In order for the
Riemann surface and the conformal map to be determined one would need all analytic functions on the Riemann surface itself as test functions. 

There are several ways out of these problems, and we shall consider, in this paper, two such ways:
\begin{itemize}

\item Restrict $f$ to the class of polynomials of a fixed degree. This turns out to work well, even without specifying the branch points.

\item Restrict $f$ to rational functions which map $\D$ onto {\it quadrature Riemann surfaces} admitting a quadrature identity of a special form. Then the branch points have
to be specified explicitly, but it turns out that the presence of a quadrature identity resolves the problem of the test functions $z^k$ being unable to distinguish between sheets. 

\end{itemize}
Quadrature Riemann surfaces were introduced in \cite{Sakai-1988} and, as will become clear in the forthcoming sections,
they naturally enter the picture. 

The physical interpretation of the string equation is most easily explained with reference to general variations of analytic functions
in the unit disk. Consider an arbitrary smooth variation $f(\zeta)=f(\zeta,t)$, depending on a real parameter $t$. We always keep the normalization
$f(0,t)=0$, $f'(0,t)>0$, and $f$ is assumed to be analytic in a full neighborhood of the closed unit disk, with $f'\ne 0$ on $\partial\D$. 
Then one may define a corresponding Poisson bracket written with a subscript $t$: 
\begin{equation}\label{poissont}
\{f,g\}_t=\zeta \frac{\partial f}{\partial \zeta}\frac{\partial g}{\partial t}-\zeta  \frac{\partial g}{\partial \zeta}\frac{\partial f}{\partial t}.
\end{equation}

This Poisson bracket is itself an analytic function in a neighborhood of $\partial\D$. It is determined by its values on $\partial\D$,
where we have
$$
\{f,f^*\}_t= 2\re[\dot{f} \,\overline{\zeta f'}].
$$
The classical {\it Hele-Shaw flow} moving boundary problem, also called {\it Laplacian growth}, is a particular evolution,  characterized (in the univalent case) by
the harmonic moments being conserved, except for the first one which increases linearly with time, say as
$M_0= 2t+{\rm constant}$. This means that $\dot{f}=2\partial f/\partial M_0$, which makes $\{f,f^*\}_t=2\{f,f^*\}$
and identifies the string equation (\ref{string1}) with the  {\it Polubarinova-Galin equation} 
\begin{equation}\label{PG}
{\rm Re\,} [\dot{f}(\zeta, t)\,\overline{\zeta f'(\zeta,t)} ]=1, \quad \zeta\in\partial\D,
\end{equation}
for the Hele-Shaw  problem. 

Dividing (\ref{PG}) by $|f'|$ gives
\begin{equation*}\label{PG1}
{\rm Re\,} [\dot{f}\cdot\overline{\frac{\zeta f'}{|\zeta f' |}} ]=\frac{1}{|\zeta f'|} \quad \text{on }\partial\D.
\end{equation*}
Here the left member can be interpreted as the inner product between $\dot{f}$ and the unit normal vector on $\partial\Omega=f(\partial \D)$,
and the right member as the gradient of a suitably normalized Green's function of $\Omega=f(\D)$ with pole at the origin. In fact, taking that Green's function
to be $\log|\zeta|$  when pulled back to $\D$ and differentiating this with respect to  $z=f(\zeta)$, to get the gradient, we have
$$
|\nabla G_\Omega|=\big| 2\frac{\partial}{\partial z} \log |\zeta|\big|    =\frac{1}{|\zeta f'(\zeta)|}.
$$

Thus (\ref{PG}) says that $\partial\Omega$ moves in the normal direction with velocity $ |\nabla G_\Omega|$, and for the string equation we then have
$$
2\frac{\partial f}{\partial M_0}\big|_{\rm normal} = \frac{\partial G_\Omega}{\partial n} \quad \text{on }\partial\Omega,
$$  
the subscript ``{\rm normal}'' signifying normal component when considered as a vector on $\partial\Omega$.

The above interpretations remain valid in the non-univalent case, with $G_\Omega$ interpreted as the Green's
function of $\Omega$ regarded as a Riemann surface.  However, as already remarked, the moments $M_k$ do not determine $\Omega$ as a Riemann
surface in this case, also specification of the branch points is needed. 
Thus the string equation represents a whole family of domain evolutions in the
non-univalent case. The most natural of these is the one for which the branch points remain fixed, because this case represents a pure expansion of an
initially given Riemann surface which does not change internally during the evolution.

The general Poisson bracket (\ref{poissont}) enters when differentiating the formula (\ref{Mk}) for the moments $M_k$ with respect to $t$
for a given evolution. For a more general statement in this respect we may replace the function $f(\zeta)^k$ appearing in (\ref{Mk}) by a function $g(\zeta,t)$ which is
analytic in $\zeta$ and depends on $t$ in the same way as $h(f(\zeta,t))$ does, where $h$ is analytic, for example $h(z)=z^k$. 
This  means that $g=g(\zeta,t)$ has to satisfy
\begin{equation}\label{gf}
\frac{\dot{g}(\zeta,t)}{ g'(\zeta,t)}=\frac{\dot{f}(\zeta,t)}{f'(\zeta,t)},
\end{equation}
saying that $g$ ``flows with'' $f$ and locally can be regarded as a time independent function in the image domain of $f$. 

We then have (cf.  Lemma~4.1 in \cite{Gustafsson-Lin-2014})
\begin{lemma}\label{lem:momentvariation1}
Assume that $g(\zeta,t)$ is analytic in $\zeta$ in a neighborhood of the closed unit disk and depends smoothly on $t$ in such a way that (\ref{gf}) holds. Then
\begin{equation}\label{Lie}
\frac{1}{2\pi\I}\frac{d}{d t} \int_\D g(\zeta,t)|f(\zeta,t)|^2 d\bar{\zeta} d\zeta =\frac{1}{2\pi}\int_{0}^{2\pi} g(\zeta,t) \{f,f^*\}_t \,d\theta,
\end{equation}
the last integrand  being evaluated at $\zeta=e^{\I\theta}$. 
\end{lemma}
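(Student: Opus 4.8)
The plan is to differentiate the left-hand side under the integral sign (the domain $\D$ is fixed, so no boundary motion enters), to transport the right-hand side from the boundary into the bulk by the complex Green (Cauchy--Pompeiu) formula, and then to compare the two area integrands; the flow condition (\ref{gf}) will be used only at the very last step, where it forces an exact pointwise cancellation. (The weight in (\ref{Lie}) should be read as $|f'(\zeta,t)|^2$, the one appearing in the moment representation (\ref{Mk}); writing $|f'|^2=f'\,\overline{f'}$ with $f'$ holomorphic and $\overline{f'}$ antiholomorphic is what makes the mechanism work.) Since $\D$ is $t$-independent and $g,f'$ are analytic up to the boundary, differentiating under the integral gives, with $\dot{}=\partial/\partial t$, $'=\partial/\partial\zeta$ and $d\bar\zeta\,d\zeta=2\I\,dx\,dy$,
\[
\frac{1}{2\pi\I}\frac{d}{dt}\int_\D g\,|f'|^2\,d\bar\zeta\,d\zeta=\frac{1}{\pi}\int_\D\big(\dot g\,f'\,\overline{f'}+g\,\dot f'\,\overline{f'}+g\,f'\,\overline{\dot f'}\big)\,dx\,dy .
\]

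Next I would rewrite the right-hand side. Starting from the recorded boundary value $\{f,f^*\}_t=2\re[\dot f\,\overline{\zeta f'}]=\dot f\,\overline{\zeta f'}+\overline{\dot f}\,\zeta f'$ I split the integral into two pieces. On $\partial\D$ one has $d\theta=d\zeta/(\I\zeta)$, $d\theta=\I\zeta\,d\bar\zeta$ and $|\zeta|=1$; with these the piece carrying $\overline{\dot f}\,\zeta f'$ becomes $\frac{1}{2\pi\I}\oint_{\partial\D} g f'\,\overline{\dot f}\,d\zeta$, and the piece carrying $\dot f\,\overline{\zeta f'}$ becomes $\frac{\I}{2\pi}\oint_{\partial\D} g\dot f\,\overline{f'}\,d\bar\zeta$ (using $\zeta\,\overline{\zeta f'}=|\zeta|^2\overline{f'}=\overline{f'}$). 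Applying the complex Green formulas $\oint_{\partial\D}W\,d\zeta=2\I\int_\D\partial_{\bar\zeta}W\,dx\,dy$ and $\oint_{\partial\D}W\,d\bar\zeta=-2\I\int_\D\partial_\zeta W\,dx\,dy$, and using that $gf'$ is holomorphic with $\partial_{\bar\zeta}\overline{\dot f}=\overline{\dot f'}$, while $\overline{f'}$ is antiholomorphic with $\partial_\zeta(g\dot f)=g'\dot f+g\dot f'$, I obtain
\[
\frac{1}{2\pi}\int_0^{2\pi} g\,\{f,f^*\}_t\,d\theta=\frac{1}{\pi}\int_\D\big((g'\dot f+g\dot f')\,\overline{f'}+g f'\,\overline{\dot f'}\big)\,dx\,dy .
\]

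Finally I would subtract the two area integrals. The terms $g\,\dot f'\,\overline{f'}$ and $g f'\,\overline{\dot f'}$ occur identically on both sides and cancel, leaving
\[
\frac{1}{\pi}\int_\D\big(\dot g\,f'-g'\,\dot f\big)\,\overline{f'}\,dx\,dy ,
\]
and the factor $\dot g\,f'-g'\,\dot f$ vanishes identically because the flow condition (\ref{gf}) is exactly the statement $\dot g\,f'=g'\,\dot f$. This proves (\ref{Lie}).

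I expect the main obstacle to be the middle step: transporting the boundary integral into the bulk with the correct orientation and the correct choice between $d\zeta$ and $d\bar\zeta$ (equivalently, between $\partial_\zeta$ and $\partial_{\bar\zeta}$ in Green's formula), together with the simplification $\overline{\zeta f'}=\overline{f'}/\zeta$ coming from $|\zeta|=1$. Conceptually the crux is that (\ref{gf}) is precisely the condition making the time variation of the bulk density $g|f'|^2$ a pure boundary flux: once the right-hand side has been written in the bulk, nothing beyond this algebraic identity is needed. The opposite route---converting the left-hand area integral to the boundary---is available but less convenient, since antidifferentiating $\overline{f'}$ in $\bar\zeta$ produces an antiderivative of $f$ that must afterwards be removed by a boundary integration by parts; pushing the right-hand side \emph{inward} sidesteps this.
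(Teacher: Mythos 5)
Your proof is correct, including the sign and orientation bookkeeping in the two complex Green formulas, and you rightly note that the weight in (\ref{Lie}) must be read as $|f'|^2$ (a typo in the statement; the paper's own proof also works with $|f'|^2$, consistently with (\ref{Mk})). However, your route is genuinely different from the paper's. The paper goes in the opposite direction: it first converts the \emph{left}-hand side to a boundary integral, $\int_\D g\,|f'|^2\,d\bar\zeta\,d\zeta=\int_{\partial\D} g\,f^*f'\,d\zeta$, replacing $\bar f$ by the holomorphic reflection $f^*$; it then differentiates this boundary integral in $t$, performs one integration by parts on the circle to trade the term $g f^*\dot f'$ for $-(g'f^*+g(f^*)')\dot f$, uses (\ref{gf}) to kill the term $(\dot g f'-g'\dot f)f^*$, and recognizes what remains as $\int_{\partial\D} g\,\{f,f^*\}_t\,d\zeta/\zeta$. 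So in the paper everything stays in the holomorphic category (functions of $\zeta$ alone, $d\zeta$ integrals, the bracket appearing directly in its defining form $\zeta f'\dot f^*-\zeta(f^*)'\dot f$), and the flow condition kills a boundary term; in your argument everything is pushed into the bulk, you start from the real form $\{f,f^*\}_t=2\re[\dot f\,\overline{\zeta f'}]$ of the bracket on $\partial\D$, and (\ref{gf}) produces a pointwise cancellation of area integrands. Your version avoids the integration by parts and avoids $f^*$ altogether, at the price of Wirtinger calculus and careful $d\zeta$-versus-$d\bar\zeta$ orientation; the paper's version is the one that meshes naturally with the rest of its machinery (residue computations with $f^*f'$, as in (\ref{Mk}) and (\ref{MB})), which is presumably why it is phrased on the boundary. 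Both hinge on exactly the same two ingredients, a Stokes-type theorem plus the flow condition (\ref{gf}), applied in opposite directions.
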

As a special case, with $g(\zeta,t)=h(f(\zeta,t))$, we have
\begin{corollary}\label{lem:momentvariation}
If $h(z)$ is analytic in a fixed domain containing the closure of $f({\D},t)$ then
\begin{equation*}\label{Lie1}
\frac{1}{2\pi\I}\frac{d}{d t} \int_\D h(f(\zeta,t))|f(\zeta,t)|^2 d\bar{\zeta}d\zeta =\frac{1}{2\pi}\int_{0}^{2\pi} h( f(\zeta,t)) \{f,f^*\}_t \,d\theta.
\end{equation*}
\end{corollary}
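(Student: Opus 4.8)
The plan is to recognize this corollary as the special case $g=h\circ f$ of Lemma~\ref{lem:momentvariation1}, so that the entire task reduces to verifying that this choice of $g$ satisfies the two hypotheses of the lemma; once that is done, the conclusion follows by mere substitution. First I would set $g(\zeta,t)=h(f(\zeta,t))$ and check the compatibility condition (\ref{gf}). By the chain rule $\dot{g}=h'(f)\,\dot{f}$ and $g'=h'(f)\,f'$, where dot and prime denote $\partial/\partial t$ and $\partial/\partial\zeta$ and $h'$ is evaluated at $f(\zeta,t)$. Cross-multiplying gives $\dot{g}\,f'=h'(f)\,\dot{f}\,f'=\dot{f}\,g'$, which is precisely (\ref{gf}). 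Writing the relation in this cross-multiplied form, rather than as the ratio $\dot{g}/g'=\dot{f}/f'$, sidesteps the only delicate point: $f'$, and hence $g'$, may vanish at interior branch points of $f$, whereas the identity $\dot{g}\,f'=\dot{f}\,g'$ holds at every point, including such zeros.

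Next I would confirm the analyticity and smoothness requirement on $g$. Since $f$ is analytic in a neighborhood of $\overline{\D}$ and, by hypothesis, carries $\overline{\D}$ into the fixed domain on which $h$ is analytic, the composition $g=h\circ f$ is analytic in $\zeta$ in a neighborhood of $\overline{\D}$. Because the domain of $h$ is independent of $t$ while $\overline{f(\D,t)}$ varies continuously, a single neighborhood of $\overline{\D}$ serves for all $t$ in a small interval, so $g$ inherits smooth dependence on $t$, as the lemma demands.

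With both hypotheses verified, I would invoke Lemma~\ref{lem:momentvariation1} and substitute $g=h\circ f$ into the two sides of (\ref{Lie}): the left-hand integrand $g(\zeta,t)|f|^2$ becomes $h(f(\zeta,t))|f(\zeta,t)|^2$, and the right-hand integrand $g(\zeta,t)\{f,f^*\}_t$ becomes $h(f(\zeta,t))\{f,f^*\}_t$, which is exactly the asserted identity. I do not expect any genuine obstacle here, since all the analytic content resides in Lemma~\ref{lem:momentvariation1} and the corollary is a one-line specialization; whatever care is required lies entirely in the bookkeeping just described, namely reading (\ref{gf}) in cross-multiplied form so that interior zeros of $f'$ cause no division problems, and using the fixedness of the domain of $h$ to secure analyticity of the composition uniformly in $t$.
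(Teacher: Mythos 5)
Your proposal is correct and matches the paper exactly: the paper introduces the condition (\ref{gf}) precisely so that $g=h\circ f$ satisfies it, and presents Corollary~\ref{lem:momentvariation} as the immediate specialization of Lemma~\ref{lem:momentvariation1} to this choice of $g$. Your chain-rule verification of (\ref{gf}) (in cross-multiplied form to handle zeros of $f'$) is the only detail the paper leaves implicit, and it is carried out correctly.
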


\begin{proof}
The proof of (\ref{Lie}) is straight-forward: differentiating under the integral sign and using partial integration we have
$$
\frac{d}{d t} \int_\D g |f'|^2 d\bar{\zeta}d\zeta=\frac{d}{d t} \int_{\partial \D} g f^* f' d\zeta
 =\int_{\partial \D} \left(\dot{g} f^* f' + g\dot{f}^* f' + gf^* \dot{f}'\right)d\zeta
$$
$$
 =\int_{\partial \D} \left(\dot{g} f^* f' + g\dot{f}^* f' -g' f^*\dot{f} -g (f^*)' \dot{f}   \right)d\zeta
$$
$$
 =\int_{\partial \D} \left( (\dot{g}f'- \dot{f}g') f^*+ g( \dot{f}^* f'  - (f^*)' \dot{f}   )\right)d\zeta=\int_{\partial \D} g\cdot \{f,f^*\}_t \,\frac{{d}\zeta}{\zeta},
$$
which is the desired result.

\end{proof}


\section{An example}\label{sec:example}


\subsection{General case}\label{sec:general case}

For constants $a,b,c\in\C$ with $0<|a|<1<|b|$, $c\ne 0$, consider the rational function
\begin{equation}\label{fabc}
f(\zeta)=c\cdot \frac{\zeta(\zeta-a)}{\zeta-b}.
\end{equation}
Here the derivative
$$
f'(\zeta)=c\cdot\frac{\zeta^2-2b\zeta +ab}{(\zeta-b)^2}=c\cdot\frac{(\zeta-\omega_1)(\zeta-\omega_2)}{(\zeta-b)^2}
$$
vanishes for
\begin{equation}\label{omega12}
\omega_{1,2}=b(1\pm \sqrt{1-\frac{a}{b}}), 
\end{equation}
where $\omega_1\omega_2=ab$, $\frac{1}{2}(\omega_1 +\omega_2)=b$. The constant $c$ is to be adapted according to the normalization
$$
f'(0)= \frac{ac}{b}>0.
$$
This fixes the argument of $c$, so the parameters $a$, $b$, $c$ represent $5$ real degrees of freedom for $f$.

We will be interested in choices of $a,b,c$ for which one of the roots $\omega_{1,2}$ is in the unit disk, say $|\omega_1|<1$. Then
$|\omega_2|>1$. The function $f$ is in that case not locally univalent, but can be considered as a conformal map onto a Riemann surface
over $\C$ having a branch point over 
$$
f(\omega_1)={cb}\left(1-\sqrt{1-\frac{a}{b}}\right)^2 =\frac{c \,\omega_1^2}{b},
$$
where we, as a matter of notation, let $\omega_1$ correspond to the minus sign in (\ref{omega12}) (this is natural in the case $0<a<1<b$).
The holomorphically reflected function is
$$
f^*(\zeta)=\bar{c} \cdot\frac{1-\bar{a}\zeta}{\zeta(1-\bar{b}\zeta)}.
$$

Let $h(z)$ be any analytic (test) function defined in a neighborhood of the closure of $f(\D)$, for example $h(z)=z^k$, $k\geq 0$. Then,
denoting by $\nu_f$ the index of $f(\partial\D)$, see (\ref{index}), we have
$$
\frac{1}{\pi} \int_\C h \nu_f {d}x{d}y = \frac{1}{2\pi\I} \int_\D h(f(\zeta)) |f'(\zeta)|^2 d\bar{\zeta} d\zeta
= \frac{1}{2\pi\I} \int_{\partial \D}h(f(\zeta)) f^*(\zeta) f'(\zeta)  d\zeta
$$
$$
=\res_{\zeta=0} h(f(\zeta)) f^*(\zeta) f'(\zeta)  d\zeta + \res_{\zeta=1/\bar{b}} h(f(\zeta)) f^*(\zeta) f'(\zeta)  d\zeta
$$
$$
= |c|^2\left( \frac{a}{b} \,h(f(0)) + \frac{(\bar{a}-\bar{b})(1-2|b|^2 +a\bar{b}|b|^2)}{\bar{b}(1-|b|^2)^2}\, h(f(1/\bar{b}))\right).
$$
In summary,
\begin{equation}\label{qihf}
\frac{1}{\pi} \int_\C h \nu_f {d}x{d}y = A h(f(0))+ Bh(f(1/\bar{b})),
\end{equation}
where
\begin{equation}\label{AB}
A=|c|^2 \frac{a}{b},\quad 
B= |c|^2\frac{(\bar{a}-\bar{b})(1-2|b|^2 +a\bar{b}|b|^2)}{\bar{b}(1-|b|^2)^2}.
\end{equation}

For the harmonic moments (with respect to the weight $\nu_f$)  this gives
\begin{align*}\label{moments}
M_0 =& A+B,\\
M_k= & B f(1/\bar{b})^k, \quad k=1,2,\dots.
\end{align*}
Here only $M_0, M_1, M_2$ are needed since the $M_k$ lie in geometric progression from $k=1$ on. 
From these three moments, $A$, $B$ and $f(1/\bar{b})$ can be determined
provided $M_1\ne 0$, and after that $a,b,c$ can be found, at least generically. Thus the moments $M_0, M_1,M_2$ actually suffice to locally determine $f$, 
and we can write
$$
f(\zeta)=f(\zeta; M_0, M_1, M_2),
$$
provided $f$ is known  {\it a priori} to be of the form (\ref{fabc}) with $M_1\ne 0$. However, as will be seen below, 
when we specialize to the case $M_1=0$ things change.

The quadrature Riemann surface picture enters when one starts from the second member in the computation above and considers $g(\zeta)=h(f(\zeta))$
as an independent test function on the Riemann surface, thus allowing $g(\zeta_1)\ne g(\zeta_2)$ even when $f(\zeta_1)=f(\zeta_2)$. The quadrature identity
becomes
\begin{equation}\label{qig}
 \frac{1}{2\pi\I} \int_\D g(\zeta) |f'(\zeta)|^2 d\bar{\zeta} d\zeta
=A g(0)+ Bg(1/\bar{b}),
\end{equation}
for $g$ analytic and integrable (with respect to the weight $|f'|^2$) in the unit disk.


\subsection{First subcase}\label{sec:first case}

There are two cases in this example which are of particular interest. These represent instances of $M_1=0$. The first case is when $a=1/\bar{b}$. 
This does not change (\ref{qig}) very much, it is only that the two weights become equal: 
\begin{equation*}
 \frac{1}{2\pi\I} \int_\D g(\zeta) |f'(\zeta)|^2 d\bar{\zeta} d\zeta
=A g(0)+ Ag(1/\bar{b}),
\end{equation*}
where $A=B={|c|^2}/{|b|^2}$ and the normalization for $c$ becomes $c>0$.
However,  (\ref{qihf}) changes more drastically because the two quadrature nodes now
lie over the same point in the $z$-plane. Indeed $f(1/\bar{b})=0=f(0)$, so (\ref{qihf}) effectively becomes a one point identity:
$$
\frac{1}{\pi} \int_\C h \nu_f {d}x{d}y = 2A h(0)
$$
and, for the moments,
\begin{equation}\label{M2A}
M_0=2A, \quad  M_1=M_2=\dots =0.
\end{equation}

Clearly knowledge of these are not enough to determine $f$. This function originally had $5$ real degrees of freedom. Two of them were 
used in condition $a=1/\bar{b}$, but there still remain three, and $M_0=2A$ is only one real equation. So something more would be needed,
for example knowledge of the location of the branch point $B_1=f(\omega_1)$. We have
$$
\omega_1=b\left( 1-\sqrt{1-\frac{1}{|b|^2}}\,\right), \quad c=|b|\sqrt{\frac{M_0}{2}},
$$
by which 
\begin{equation}\label{branchpoint}
B_1=b|b|\sqrt{\frac{M_0}{2}}\left(1- \sqrt{1-\frac{1}{|b|^2}}\,\right)^2.
\end{equation}
This equation can be solved for $b$ in terms of $B_1$ and $M_0$. Indeed,  by some elementary calculations one finds that
$$
b=\frac{B_1}{2|B_1|}\left( (\frac{2|B_0|^2}{M_0})^{1/4} + \frac{2|B_0|^2}{M_0})^{-1/4} \right),
$$
and after substitution of $a=1/\bar{b}$, $b$ and $c$ one then has  $f$ explicitly on the form
$$
f(\zeta)=f(\zeta; M_0; B_1).
$$

By (\ref{M2A}) the moment sequence is the same as that for the  disk $\D(0,\sqrt{2A})$. One way to understand that is to observe that
$$
f(\zeta) = -\frac{c}{\bar{b}} \cdot \zeta\cdot \frac{1-\bar{b}\zeta}{\zeta-b}
$$
is a function which maps $\D$ onto the disk $\D(0, \sqrt{A})$ covered twice. In other words,  $\nu_f=2\chi_{\D(0,\sqrt{A})}$. 
Note that the disk $\D(0, \sqrt{A})=\D(0, \sqrt{M_0/2})$ depends only on $M_0$, not on $B_1$, so varying  just $B_1$ keeps $f(\partial\D)$ fixed as a set.


\subsection{Second subcase}\label{sec:second case}

The second interesting case is when $\omega_1 =1/\bar{b}$. This means that the quadrature node $1/\bar{b}$ is at the same time a branch point.
What also happens is that the quadrature node looses its weight: one gets $B=0$. The quadrature node is still there, but it is  only ``virtual''. (In principle it can
be restored by allowing meromorphic test functions with a pole at the point, as the weight $|f'|^2$ certainly allows, but we shall not take such steps.)
Thus we have again a one node quadrature identity, but this time in a more true sense,
namely that it is such a quadrature identity on the Riemann surface itself:
\begin{equation}\label{divisor}
\frac{1}{2\pi\I} \int_\D g(\zeta)|f(\zeta)|^2 d\bar{\zeta}d\zeta = A g(0),
\end{equation}
where 
$$
A=2\frac{|c|^2}{|b|^2}-\frac{|c|^2}{|b|^4}=|c|^2|\omega_1|^2(2-|\omega_1|^2).
$$
Of course we also have
$$
\frac{1}{\pi} \int_\C h \nu_f {\rm d}x{\rm d}y = Ah(0).
$$
and
\begin{equation}\label{M0A}
M_0=A, \quad  M_1=M_2=\dots =0.
\end{equation}

Here again the moments do not suffice to identify $f$. Indeed, we have now a one parameter family of functions $f$ satisfying (\ref{M0A}) with the same value
of $A$. Explicitly this becomes, in terms of $\omega_1=1/\bar{b}$, which we keep as the free parameter,
$$
f(\zeta)=C\cdot\frac{\zeta(2|\omega_1|^2- |\omega_1|^4-\bar{\omega}_1\zeta)}{1-\bar{\omega}_1 \zeta},
$$
where
$$
C =\frac{\sqrt{M_0}}{|\omega_1|\sqrt{2-|\omega_1|^2}}.
$$

The branch point is
$$
B_1=f(\omega_1)
=\frac{\omega_1|\omega_1|\sqrt{M_0}}{\sqrt{2-|\omega_1|^2}}.
$$
Since $|\omega_1|<1$ we have $|B_1|<\sqrt{M_0}$.
The above relationship can be inverted to give $\omega_1$ in terms of $B_1$ and $M_0$:
$$
\omega_1= \frac{B_1}{|B_1|}\sqrt{-\frac{|B_1|^2}{2M_0}+\sqrt{\frac{|B_1|^4}{4M_0^2}+\frac{2|B_1|^2}{M_0}}}.
$$
Thus one can explicitly  write $f$ on the form $f(\zeta)=f(\zeta;M_0;B_1)$ also in the present case. 

It is interesting to also compute $f'(\zeta)$. One gets 
$$
f'(\zeta)=C\cdot \frac{(\zeta-\omega_1)(\zeta-\frac{2}{\bar{\omega}_1}+\omega_1)}{(\zeta-\frac{1}{\bar{\omega}_1})^2},
$$
which is, up to a constant factor, the  contractive zero divisor in Bergman space corresponding to the
zero $\omega_1\in \D$, alternatively, the reproducing kernel for those $L^2$-integrable analytic
functions in $\D$ which vanish at $\omega_1$. See  \cite{Hedenmalm-1991}, \cite{Hedenmalm-Korenblum-Zhu-2000}
for these concepts in general. Part of the meaning in the present case is simply that (\ref{divisor}) holds.


\section{The string equation for polynomials}\label{sec:polynomials}

We now restrict to polynomials, of a fixed degree $n+1$:
\begin{equation}\label{fpolynomial}
f(\zeta)= \sum_{j=0}^{n}a_{j}\zeta^{j+1}, \quad a_0>0.
\end{equation}
The derivative is of degree $n$, and we denote its coefficients by $b_j$:
\begin{equation}\label{fderivative}
f'(\zeta)=\sum_{j=0}^n b_j\zeta^j =\sum_{j=0}^n (j+1)a_j \zeta^j, 
\end{equation}
It is obvious from Definition~\ref{def:poisson}
that whenever the Poisson bracket (\ref{poisson}) makes sense (i.e., whenever $\partial f/\partial M_0$ makes
sense), it will vanish if $f'$ has zeros at two points which are reflections
of each other with respect to the unit circle. Thus the string equation cannot hold in such cases. The main result
in this section says that for polynomial maps this is the only exception: the string equation makes sense and
holds whenever $f'$ and $f'^*$ have no common zeros.

Two polynomials having common zeros is something which can be tested by the classical resultant, which vanishes
exactly in this case. Now $f'^*$ is not really a polynomial, only a rational function, but one may work with the polynomial $\zeta^n f'^*(\zeta)$
instead. Alternatively, one may use the {\it  meromorphic resultant}, which applies to meromorphic functions on a compact Riemann surface, 
in particular rational functions.  Very briefly expressed, the meromorphic resultant $\Res (g,h)$
between two meromorphic functions $g$ and $h$ is defined as  the multiplicative action of one of the functions on the divisor of the other. 
The second member of (\ref{RR}) below gives an example of the multiplicative action of $h$ on the divisor of $g$. See \cite{Gustafsson-Tkachev-2009} for further details.

We shall need the meromorphic resultant only in the case of two rational functions of the form $g(\zeta)=\sum_{j=0}^n b_j\zeta^j $ and $h(\zeta)=\sum_{k=0}^n c_k \zeta^{-k}$,
and in this case it is closely related to the ordinary polynomial resultant $\Res_{\rm pol}$ (see \cite{Waerden-1940})
for the two polynomials $g(\zeta)$ and $\zeta^n h(\zeta)$. Indeed,
denoting by $\omega_1,\dots, \omega_n$ the zeros of $g$, the divisor of $g$ is the formal sum $(\omega_1)+\dots +(\omega_n)-n(\infty)$,
noting that $g$ has a pole of order $n$ at infinity. This gives  the meromorphic resultant, and its relation to the polynomial resultant, as 
\begin{equation}\label{RR}
\Res (g,h) =\frac{h(\omega_1)\cdot \dots \cdot h(\omega_n)}{h(\infty)^n}
=\frac{1}{b_0^n c_0^n} \Res_{\rm pol} (g(\zeta), \zeta^n h(\zeta)).
\end{equation}

The main result, to be described below, is an interplay between the Poisson bracket, the resultant and
the Jacobi determinant between the moments and the coefficients of $f$ in (\ref{fpolynomial}).
The theorem is mainly due to O.~Kuznetsova and V.~Tkachev \cite{Kuznetsova-Tkachev-2004}, \cite{Tkachev-2005},
only the statement about the string equation is (possibly) new.
One may argue that this string equation can actually be obtained from the string equation for univalent polynomials
by ``analytic continuation'', but we think that writing down an explicit proof in the non-univalent case really clarifies the nature of the string equation.
In particular the proof shows that the string equation is not an entirely trivial identity.

\begin{theorem}\label{thm:polynomialstring}
With $f$ a polynomial as in (\ref{fpolynomial}), the identity 
\begin{equation}\label{jacobi}
\frac{\partial(\bar{M}_{n}, \dots \bar{M}_1, M_0 , M_1,
\dots, M_n)}{\partial(\bar{a}_{n},\dots, \bar{a}_1, a_0,
a_1, \dots, a_{n} )}
=2a_0^{n^2+3n+1}\Res(f',f'^*)
\end{equation}
holds generally. It follows that the derivative $\partial f/\partial M_0$ makes sense whenever  $\Res(f',f'^*)\ne 0$, and then also  the string equation
\begin{equation}\label{stringthm}
\{f,f^*\}=1
\end{equation} 
holds.
\end{theorem}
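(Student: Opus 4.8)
The plan is to take the Jacobi--determinant identity (\ref{jacobi}) as the input and deduce the string equation from it together with the moment--variation formula of Lemma~\ref{lem:momentvariation1}. First I would extract the only consequence of (\ref{jacobi}) that is needed: since $a_0>0$, the right--hand side is nonzero exactly when $\Res(f',f'^*)\neq0$, so in that case the real--analytic map $(\bar a_n,\dots,a_0,\dots,a_n)\mapsto(\bar M_n,\dots,M_0,\dots,M_n)$ has nonvanishing Jacobian and is a local diffeomorphism. Hence $M_0,\dots,M_n$ (with their conjugates) serve as local coordinates on the space of degree--$(n+1)$ polynomials, the derivative $\partial f/\partial M_0$ is well defined, and in particular $\partial M_k/\partial M_0=\delta_{k0}$ for $0\le k\le n$. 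It then remains to show that the bracket $P:=\{f,f^*\}$ equals $1$.

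Next I would fix the shape of $P$ as a function of $\zeta$. Since $f,f',\partial_{M_0}f$ are polynomials while $f^*,(f^*)',\partial_{M_0}f^*$ are polynomials in $1/\zeta$, the bracket $P=\zeta f'\,\partial_{M_0}f^*-\zeta(f^*)'\,\partial_{M_0}f$ is a Laurent polynomial whose only possible poles are at $0$ and $\infty$; a short degree count (using $f(0)=0$, so that $\zeta f'$ and $\partial_{M_0}f$ each vanish at $\zeta=0$) gives $P=\sum_{m=-n}^{n}c_m\zeta^m$. Invariance of the Poisson bracket under reflection in the unit circle, already exploited in Section~\ref{sec:univalent string}, yields $P^*=P$, that is $c_{-m}=\bar c_m$; so it suffices to prove $c_0=1$ and $c_{-1}=\dots=c_{-n}=0$.

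The heart of the matter is to harvest enough linear conditions on the $c_m$ from the conservation of moments. I would apply Corollary~\ref{lem:momentvariation} to the evolution with $t=M_0$ and $M_1,\dots,M_n$ frozen, for which $\{f,f^*\}_t=\{f,f^*\}=P$, taking $h(z)=z^k$. By (\ref{Mk}) the left--hand side is $dM_k/dM_0=\delta_{k0}$ for $0\le k\le n$, so for these $k$
\begin{equation*}
\frac{1}{2\pi\I}\oint_{|\zeta|=1}f(\zeta)^k\,P(\zeta)\,\frac{d\zeta}{\zeta}=\delta_{k0}.
\end{equation*}
As $P$ has no pole on or inside $|\zeta|=1$ except at $\zeta=0$, each integral is the residue there, i.e. the coefficient of $\zeta^{0}$ in $f^kP$. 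Writing $f(\zeta)^k=a_0^k\zeta^{k}+(\text{higher order})$, only the principal part of $P$ can contribute for $k\ge1$, and the condition becomes $\sum_{m=k}^{n}c_{-m}\,[\zeta^{m}]f^k=0$. This is a triangular linear system with diagonal entries $[\zeta^k]f^k=a_0^k\neq0$; solving it from $k=n$ downward forces $c_{-n}=\dots=c_{-1}=0$, whence $c_1=\dots=c_n=0$ by the reflection symmetry, while $k=0$ gives $c_0=1$. Therefore $P\equiv1$, which is (\ref{stringthm}).

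I expect the genuine obstacle to be conceptual rather than computational. The clean univalent argument recalled in Section~\ref{sec:univalent string} expands $S(z)=\sum_k M_k/z^{k+1}+(\text{holomorphic inside})$ and reads off $\partial_{M_0}S=1/z+(\text{holomorphic})$; but in the non--univalent case the principal part of the Schwarz function at the interior point $z=0$ is governed by $a_0^2$ rather than by the moments, so that route collapses. The device above circumvents this by never inverting Richardson's formula: it uses only that the first $n+1$ moments are frozen and that the bracket is an honest Laurent polynomial of degree $\le n$, so that $n$ conserved moments together with the reflection symmetry pin it down exactly. The single ingredient I must import wholesale is the determinant identity (\ref{jacobi}) itself, whose proof --- the Kuznetsova--Tkachev evaluation of the Jacobian as a resultant --- is the hard algebraic step and is precisely what makes $\partial f/\partial M_0$ meaningful in the first place.
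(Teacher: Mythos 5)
Your deduction of the string equation is correct, and it is essentially the paper's own mechanism in different clothing. The paper also starts from Corollary~\ref{lem:momentvariation} applied to the evolution in which $M_0$ moves and all other coordinates are frozen; its equation (\ref{uvadelta}), $\sum_{i,j} v_{ki}u_{ij}\dot a_j=\delta_{k0}$, is exactly your triangular system, because by (\ref{ffua}) the vector $\sum_j u_{ij}\dot a_j$ is nothing but the vector of Laurent coefficients of $\{f,f^*\}$, while $v_{ki}={\rm coeff}_{+i}(f^k)$ is your matrix $[\zeta^i]f^k$ with diagonal $a_0^k\ne 0$. The paper's observation that the $0$th column of $V^{-1}$ is the unit vector is precisely your back-substitution from $k=n$ downward. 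The only substantive difference is that the paper carries the conjugated moments $\bar M_k$ along as separate rows (the second triangular block of $V$), whereas you discard them and instead invoke the reflection symmetry $\{f,f^*\}^*=\{f,f^*\}$ to transfer the vanishing of $c_{-n},\dots,c_{-1}$ to $c_1,\dots,c_n$; both are valid, and your variant is slightly leaner (the symmetry does hold here and is the same fact the paper uses in Section~\ref{sec:univalent string}).

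The genuine shortfall is that (\ref{jacobi}) is itself part of the assertion of the theorem, and you import it wholesale; your proposal therefore proves only the second half of the statement. This division of labor is defensible with a citation --- the paper attributes (\ref{jacobi}) mainly to Kuznetsova and Tkachev and follows \cite{Gustafsson-Tkachev-2009b} --- but you should be aware that in the paper the two halves are not independent: the proof of (\ref{jacobi}) consists in factoring the Jacobian matrix as $VU$, computing $\det V=a_0^{n(n+1)}$ from triangularity, and evaluating $\det U=2b_0^{2n+1}\Res(f',f'^*)$ by column operations that reduce $U$ to the Sylvester matrix of $f'(\zeta)$ and $\zeta^n f'^*(\zeta)$; and it is exactly the matrices $V$ and $U$ built in that computation which are then recycled to give the string equation. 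So a self-contained writeup along your lines would still need to construct these objects (or quote the Kuznetsova--Tkachev identity precisely as an external theorem), after which the string equation follows just as you describe. Your closing heuristic about why the Schwarz-function argument of Section~\ref{sec:univalent string} fails in the non-univalent case is tangential and a bit imprecise (the real obstruction is that the moments alone need not determine $f$, e.g.\ branch points may enter as extra variables), but nothing in your proof depends on it.
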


\begin{proof}
For the first statement we essentially follow the proof given in \cite{Gustafsson-Tkachev-2009b}, but add some
details which will be necessary for the second statement.

Using Corollary~\ref{lem:momentvariation} we shall first investigate how the moments change under
a general variation of $f$, i.e., we let $f(\zeta)=f(\zeta,t)$ depend smoothly on a real parameter $t$. Thus
$a_j=a_j(t)$, $M_k=M_k(t)$, and derivatives with respect to $t$ will often be denoted by a dot.
For the Laurent series of any function $h(\zeta)=\sum_i c_i \zeta^i$ we denote by ${\rm coeff}_i (h)$ the coefficient 
of $\zeta^i$:  
$$
{\rm coeff}_i (h)=c_i=\frac{1}{2\pi\I} \oint_{|\zeta|=1} \frac{h(\zeta)d\zeta}{\zeta^{i+1}}.
$$
By Corollary~\ref{lem:momentvariation} we then have, for $k\geq 0$,
$$
\frac{d}{dt}{M}_k=\frac{1}{2\pi\I}\frac{d}{d t} \int_\D f(\zeta,t)^k |f(\zeta,t)|^2d\bar{\zeta}d\zeta =\frac{1}{2\pi}\int_{0}^{2\pi} f^k \{f,f^*\}_t \,d\theta
$$
$$
={\rm coeff}_0 (f^k \{f,f^*\}_t)= \sum_{i= 0}^n {\rm coeff}_{+i} (f^k) \cdot {\rm coeff}_{-i} (\{f,f^*\}_t).
$$
Note that $f(\zeta)^k$ contains only positive powers of $\zeta$ and that $\{f,f^*\}_t$ contains powers with exponents in the interval $-n\leq i\leq n$ only.

In view of (\ref{fpolynomial})  the matrix
\begin{equation}\label{vkifk}
v_{ki}=  {\rm coeff}_{+i} (f^k) \quad (0\leq k,i\leq n)
\end{equation}
is upper triangular, i.e., $v_{ki}=0$ for $0\leq i<k$, with  diagonal elements being powers of $a_0$:
$$
v_{kk}= a_0^k.
$$

Next we shall find the coefficients of the Poisson bracket. These will involve the coefficients  $b_k$ and $\dot{a}_j$, but also their complex conjugates. 
For a streamlined treatment it is convenient to introduce coefficients with negative indices to represent  the complex conjugated quantities. The same for the moments.
Thus we define, for the purpose of this proof and the forthcoming Example~\ref{ex:matrix},
\begin{equation}\label{Mab}
M_{-k}=\bar{M}_k, \quad a_{-k}=\bar{a}_k, \quad b_{-k}=\bar{b}_k \quad (k>0).
\end{equation}
Turning points are the real quantities  $M_0$ and $a_0=b_0$. 

In this notation the expansion of the Poisson bracket becomes
\begin{equation}\label{ffff}
\{f,f^*\}_t
= f'(\zeta)\cdot\zeta \dot{f}^*(\zeta) +  f'^*(\zeta)\cdot \zeta^{-1}\dot{f}(\zeta)
\end{equation}
$$
=\sum_{\ell,j\geq 0}   b_\ell \dot{{\bar{a}}}_{j}\zeta^{\ell-j}+\sum_{\ell,j\leq 0}{\bar{b}}_{\ell} \dot{a}_j\zeta^{j-\ell}
=\sum_{\ell\geq 0,\, j\leq 0}   b_\ell \dot{{a}}_{j}\zeta^{\ell+j}+\sum_{\ell\leq 0, \, j\geq 0}{b}_{\ell} \dot{a}_j\zeta^{\ell+j}
$$
$$
=b_0\dot{a}_0 +\sum_{\ell\cdot j\leq 0}  b_\ell \dot{{a}}_{j}\zeta^{\ell+j} =b_0\dot{a}_0 +\sum_i\left( \sum_{\ell\cdot j\leq 0,\, \ell +j=-i}  b_\ell \dot{{a}}_{j}\right)\zeta^{-i}.
$$
The last summation runs over  pairs of indices $(\ell,j)$ having opposite sign (or at least one of them being zero) and adding up to $-i$. 
We presently need only to consider the case $i\geq 0$.
Eliminating $\ell$ and letting $j$ run over those values for which $\ell\cdot j\leq 0$ we therefore get
\begin{equation*}\label{coeff}
{\rm coeff}_{-i} (\{f,f^*\}_t)= b_{0}\, \dot{a}_0\,\delta_{i0}+\sum_{j\leq -i} b_{-(i+j)}\, \dot{a}_j +\sum_{j\geq 0} b_{-(i+j)}\, \dot{a}_j .
\end{equation*}
Here $\delta_{ij}$ denotes the Kronecker delta. Setting,  for $i\geq 0$,
$$
u_{ij} =
\begin{cases} 
b_{-(i+j)} + b_{0} \delta_{i0} \delta_{0j}, \quad & \text{if } -n\leq j \leq -i \text{ or } 0\leq j\leq n, \\
0 & \text{in remaining cases}
\end{cases}
$$
we thus have 
\begin{equation}\label{coeff}
{\rm coeff}_{-i} (\{f,f^*\}_t)=\sum_{j=0}^n u_{ij} \dot{a}_j.
\end{equation}

Turning to the complex conjugated moments we have, with $k< 0$, 
$$
\dot{M}_{k}= \dot{\bar{M}}_{-k}= \sum_{i\leq 0} \overline{{\rm coeff}_{-i} ({f}^{-k})} \cdot \overline{ {\rm coeff}_{+i} (\{f,f^*\}_t)}.
$$
Set, for $k<0$, $i\leq 0$, 
$$
v_{ki}= \overline{ {\rm coeff}_{-i} (f^{-k})}.
$$
Then $v_{ki}=0$ when $k<i\leq 0$, and $v_{kk}=a_0^{-k}$.
To achieve the counterpart of (\ref{coeff}) we define, for $i\leq 0$,
$$
u_{ij} =
\begin{cases} 
b_{-(i+j)} + b_{0} \delta_{i0} \delta_{0j}, \quad & \text{if } -n\leq j \leq 0\text{ or } -i \leq j\leq n, \\
0 & \text{in remaining cases}.
\end{cases}
$$
This gives, with $i\leq 0$,
$$
\overline{{\rm coeff}_{+i} (\{f,f^*\}_t)}=\sum_{j=0}^n u_{ij} \dot{a}_j.
$$

As a summary we have, from  (\ref{vkifk}), (\ref{coeff}) and from  corresponded conjugated equations,
\begin{equation}\label{Mkaj}
\dot{M}_k=\sum_{-n\leq i,j \leq n}v_{ki} u_{ij} \dot{a}_j, \quad -n\leq k\leq n,
\end{equation} 
where
\begin{align*}
v_{ki}&= {\rm coeff}_{+i}(f^k) \quad && \text{when } 0\leq k\leq i,\\
v_{ki}&= \overline{{\rm coeff}_{-i}({f}^{-k})} \quad&&\text{when } i\leq k<0,\\
v_{ki}&=0 \quad &&\text{in remaining cases},\\
u_{ij} &= b_{-(i+j)} + b_{0} \delta_{i0}\delta_{0j}\quad && \text{in index intervals made explicit above},\\
u_{ij}&=0 \quad &&\text{in remaining cases}.
\end{align*}

We see that the full matrix $V=(v_{ki})$ is triangular in each of the two blocks along the main diagonal
and vanishes completely in the two remaining blocks. Therefore, its determinant is simply the product of the
diagonal elements. More precisely this becomes
\begin{equation}\label{Va}
\det V = a_0^{n(n+1)}.
\end{equation}

The matrix $U=(u_{ij})$ represents the linear dependence of the bracket $\{f,f^*\}_t$ on $f'$ and $f'^*$, and it acts 
on the column vector with components $\dot{a}_j$, then representing the linear dependence on  $\dot{f}$ and $\dot{f}^*$.
The computation started at (\ref{ffff})  can thus be finalized as
\begin{equation}\label{ffua}
\{f,f^*\}_t= \sum_{-n\leq i,j \leq n}u_{ij} \dot{a}_j \zeta^{-i}.
\end{equation}

Returning to (\ref{Mkaj}), this equation says that the matrix of partial derivatives $\partial M_k/ \partial a_j$ equals the matrix product $VU$, in
particular that
$$
\frac{\partial({M}_{-n}, \dots {M}_{-1}, M_0 , M_1,
\dots, M_n)}{\partial({a}_{-n},\dots, {a}_{-1}, a_0,
a_1, \dots, a_{n} )}
=\det V\cdot \det U.
$$
The first determinant was already computed above, see (\ref{Va}). It remains to connect $\det U$ to the meromorphic resultant $\Res(f',f'^*)$. 

For any kind of evolution, $\{f,f^*\}_t$ vanishes whenever $f'$ and $f'^*$ have a common zero. 
The meromorphic resultant $\Res (f', f'^*)$ is a complex number which has the same non-vanishing
properties as $\{f,f^*\}_t$, and it is in a certain sense minimal with this property. 
From this one may expect that the determinant of $U$ is simply a multiple of the resultant. Taking homogenieties  into account the constant of proportionality
should be $b_0^{2n+1}$, times possibly some numerical factor. The precise formula in fact turns out to be
\begin{equation}\label{URes}
\det U = 2 b_0^{2n+1} \Res (f', f'^* ).
\end{equation}

One way to prove it is to connect $U$ to the  Sylvester matrix $S$ associated to the polynomial resultant $\Res_{\rm pol}(f'(\zeta), \zeta^n f'^*(\zeta))$. 
This matrix is of size $2n\times 2n$. By some operations with rows and columns (the details are given in \cite{Gustafsson-Tkachev-2009b}, and will in addition
be illustrated in the example below) one finds that
$$
\det U = 2b_0 \det S.
$$
From this (\ref{URes}) follows, using also (\ref{RR}).

Now, the string equation is an assertion about a special evolution.
The string equation says that $\{f,f^*\}_t=1$ for that kind of evolution for which $\partial/\partial t $ means $\partial/ \partial M_0$, in other words
in the case that $\dot{M}_0=1$ and $\dot{M}_k=0$ for $k\ne 0$.  By what has already been proved,
a unique such evolution exists with $f$ kept on the form (\ref{fpolynomial}) as long as $\Res (f',f'^*)\ne 0$.

Inserting $\dot{M}_k=\delta_{k0}$ in (\ref{Mkaj}) gives
\begin{equation}\label{uvadelta}
\sum_{-n\leq i,j \leq n}v_{ki} u_{ij} \dot{a}_j=\delta_{k0}, \quad -n\leq k\leq n.
\end{equation}
It is easy to see from the structure of the matrix $V=(v_{ki})$ that the $0$:th column of the inverse matrix $V^{-1}$, which is sorted out when $V^{-1}$
is applied to the right member in (\ref{uvadelta}), is simply 
the unit vector with components $\delta_{k0}$. Therefore (\ref{uvadelta}) is equivalent to
\begin{equation}\label{vadelta}
\sum_{-n\leq j \leq n} u_{ij} \dot{a}_j=\delta_{i0}, \quad -n\leq i\leq n.
\end{equation}
Inserting this into (\ref{ffua}) shows that the string equation indeed holds.
\end{proof}

\begin{example}\label{ex:matrix}
To illustrate the above proof, and the general theory, we compute everything explicitly when $n=2$, i.e., with
$$
f(\zeta)=a_0 \zeta +a_1 \zeta^2 +a_2 \zeta^3.
$$
We shall keep the convention (\ref{Mab}) in this example. Thus
\begin{align*}
f'(\zeta)&=b_0 +b_1 \zeta +b_2 \zeta^2 = a_0+ 2a_1 \zeta +3a_2 \zeta^2,\\
f^*(\zeta)&=a_0\zeta^{-1} + {a}_{-1} \zeta^{-2} + {a}_{-2} \zeta^{-3},
\end{align*}
for example. When the equation (\ref{Mkaj}) is written as a matrix equation it becomes (with zeros represented by blanks)
\begin{equation}\label{Mdot}
\begin{pmatrix} \dot{{M}}_{-2}\\
\dot{{M}}_{-1}\\
\dot{M}_{0}\\
\dot{M}_{1}\\
\dot{M}_{2}\\
\end{pmatrix}
=
\begin{pmatrix}
a_0^2 &  &  & &   \\
 {a}_{-1}& a_0 &  &  &   \\
 &  & 1 &  & \\
 &  &  & a_{0} & {a}_{1}   \\
 &  &  &  & a_0^2  \\
\end{pmatrix}
\begin{pmatrix}
 &  & b_2 & & b_0  \\
 & b_2 & b_1 & b_0 & {b}_{-1}   \\
b_{2} & b_{1}  & 2b_0 & {b}_{-1} &  {b}_{-2}  \\
b_1& b_0&{b}_{-1} & {b}_{-2} &  \\
b_0& &{b}_{-2} &  & \\
\end{pmatrix}
\begin{pmatrix} \dot{{a}}_{-2}\\
\dot{{a}}_{-1}\\
\dot{a}_{0}\\
\dot{a}_{1}\\
\dot{a}_{2}\\
\end{pmatrix}
\end{equation}
Denoting the two $5\times 5$ matrices by $V$ and $U$ respectively it follows that the corresponding Jacobi determinant is 
$$
\frac{\partial({M}_{-2}, {M}_{-1}, M_0 , M_1, M_2)}{\partial({a}_{-2},{a}_{-1}, a_0,
a_1, a_{2} )}
=\det V\cdot \det U=a_0^6 \cdot\det U. 
$$

Here $U$ can essentially be identified with the Sylvester matrix for  the resultant $ \Res(f' f'^*)$.
To be precise, 
\begin{equation}\label{US}
\det U = 2b_0 \det S,
\end{equation}
where $S$ is the classical Sylvester matrix associated to the two polynomials $f'(\zeta)$ and $\zeta^2 f'^*(\zeta)$, namely
$$
S=
\begin{pmatrix}
 & b_2 &   & b_0  \\
 b_2 & b_1 & b_0 & {b}_{-1}   \\
 b_{1}  & b_0 & {b}_{-1} &  {b}_{-2}  \\
 b_0&  & {b}_{-2} &  \\
\end{pmatrix}.
$$

As promised in the proof above, we shall explain in this example the column operations on $U$  leading from $U$ to $S$,
and thereby proving (\ref{US}) in the case $n=2$ (the general case is similar). The matrix $U$ appears in (\ref{Mdot}).
Let $U_{-2}$, $U_{-1}$, $U_{0}$, $U_1$, $U_2$ denote the columns of $U$.
We make the following change of $U_0$:
$$
U_0 \mapsto \frac{1}{2}U_0-\frac{1}{2b_0} (b_{-2} U_{-2}+ b_{-1}U_{-1} -b_1 U_1 -b_2 U_2)
$$
The first term makes the determinant become half as big as it was before, and the other terms do not affect the 
determinant at all. The new matrix is the $5\times 5$ matrix
$$
\begin{pmatrix}
 &  & b_2 & & b_0  \\
 & b_2 & b_1 & b_0 & {b}_{-1}   \\
b_{2} & b_{1}  & b_0 & {b}_{-1} &  {b}_{-2}  \\
b_1& b_0& & {b}_{-2} &  \\
b_0& & &  & \\
\end{pmatrix}
$$
which has $b_0$ in the lower left corner, with the complementary
$4\times 4$ block being  exactly $S$ above. From this (\ref{US}) follows.

The string equation (\ref{stringthm}) becomes, in terms of coefficients
and with $\dot{a}_j$ interpreted as $\partial a_j/\partial M_0$, 
the  linear equation
$$
\begin{pmatrix}
a_0^2 &  &  & &   \\
 {a}_{-1}& a_0 &  &  &   \\
 &  & 1 &  & \\
 &  &  & a_{0} & {a}_{1}   \\
 &  &  &  & a_0^2  \\
\end{pmatrix}
\begin{pmatrix}
 &  & b_2 & & b_0  \\
 & b_2 & b_1 & b_0 & {b}_{-1}   \\
b_{2} & b_{1}  & 2b_0 & {b}_{-1} &  {b}_{-2}  \\
b_1& b_0&{b}_{-1} & {b}_{-2} &  \\
b_0& &{b}_{-2} &  & \\
\end{pmatrix}
\begin{pmatrix}
\dot{{a}}_{-2}\\
\dot{{a}}_{-1}\\
\dot{a}_{0}\\
\dot{a}_{1}\\
\dot{a}_{2}\\
\end{pmatrix}
=
\begin{pmatrix} 
0\\
0\\
1\\
0\\
0\\
\end{pmatrix}
$$
Indeed, in view of (\ref{Mdot}) this equation characterizes the $\dot{a}_i$ as those
belonging to an evolution such that $\dot{M}_0=1$, $\dot{M}_k=0$ for $k\ne 0$.
As remarked in the step from (\ref{uvadelta}) to (\ref{vadelta}), the first matrix, $V$,
can actually be removed in this equation.

\end{example}


\section{The string equation on quadrature Riemann surfaces}\label{sec:quadratureRS}

Next we shall generalize the above result for polynomials to certain kinds of rational functions, and thereby illustrate the general role played by the branch points
as being independent variables for analytic functions, besides the harmonic moments.

One way to handle the problem, mentioned in the beginning of Section~\ref{sec:non-univalent}, that the harmonic moments represent
too few test functions because functions in the $z$-plane cannot not distinguish points on different sheets on the Riemann surface above it,
is to turn to the class of quadrature Riemann surfaces. Such surfaces have been introduced and discussed in a special case in \cite{Sakai-1988} 
(see also \cite{Gustafsson-Tkachev-2011}), and we shall only need them in that special case. In principle, a quadrature Riemann surface is
a Riemann surface provided with a Riemannian metric such that a finite quadrature identity holds for the corresponding area integral 
of integrable analytic functions on the surface.  

The special case which we shall consider is that the Riemann surface is a bounded simply connected branched covering surface of the complex
plane. The Riemannian metric is then obtained by pull back from the complex plane via the covering map. In addition, we shall only
consider one point quadrature identities, but then of arbitrary high order. All this amounts to a generalization of polynomial images of the unit disk.

Being simply connected means that the quadrature Riemann surface is the conformal image of an analytic function $f$ in $\D$, and the one
point quadrature identity then is of the form, when pulled back to $\D$,
\begin{equation}\label{qiRS}
\frac{1}{2\pi\I} \int_\D g(\zeta) |f'(\zeta)|^2 \,d\bar{\zeta}d\zeta= \sum_{j=0}^n c_j g^{(j)}(0).
\end{equation}
This is to hold for analytic test functions $g$ which are integrable  with respect to the weight $|f'|^2$. The $c_j$ are fixed complex constants ($c_0$ necessarily real
and positive), and we assume that $c_n\ne 0$ to give the integer $n$ a definite meaning.

Such an identity (\ref{qiRS}) holds whenever $f$ is a polynomial, and  it is well known that in case $f$ is univalent, being a polynomial is actually necessary
for an identity (\ref{qiRS}) to hold. However, for non-univalent functions $f$ it is different. As we have already seen in Subsection~\ref{sec:second case}, rational functions
which are not polynomials can also give an identity (\ref{qiRS}) under certain conditions. 

Indeed, if (\ref{qiRS}) holds then $f$ has to be a rational function.
This has been proved in \cite{Sakai-1988}, and under our assumptions  it is easy to give a direct argument: applying (\ref{qiRS}) with
$$
g(\zeta)=\frac{1}{z-\zeta} 
$$
for $|z|>1$ makes the right member become an explicit rational function (see the right member of (\ref{qiRS1}) below), while the left member equals the Cauchy transform of the density $|f'|^2\chi_\D$. This
Cauchy transform is a continuous function in all of $\C$ with the $\bar{z}$-derivative equal to $\overline{f'(z)}f'(z)$ in $\D$, thus being there of the form
$\overline{f(z)}f'(z)+h(z)$, with $h(z)$ holomorphic in $\D$. The continuity then gives the matching condition
\begin{equation}\label{qiRS1}
\overline{f(z)}f'(z)+h(z)=\sum_{k=0}^n\frac{k! c_k}{z^{k+1}}, \quad z\in\partial\D.
\end{equation}
Here $\overline{f(z)}$ can be replaced by $f^*(z)$, and it follows that this function extends to be meromorphic in $\D$. Hence $f^*$ (and $f$) are meromorphic
on the entire Riemann sphere, and so rational.  

We point  out that  quadrature Riemann surfaces as above are dense in the class of all bounded simply connected branched covering surfaces over $\C$.
This is obvious since each polynomial $f$ produces such a surface. Therefore the restriction to quadrature Riemann surfaces is no severe restriction.
The reason that (\ref{qiRS}) is a useful identity in our context is that it reduces
the information of $f(\D)$ as a multi-sheeted surface to information concentrated at one single point on it, and near that point it does not 
matter that the test functions $1,z,z^2, \dots$ cannot distinguish different sheets from each other.    

Having an identity (\ref{qiRS}) we can easily compare the constants $(c_0,c_1, \dots, c_n)$ with the moments $(M_0,M_1,\dots,M_n)$. It is just to choose 
$g(\zeta)=f(\zeta)^k$ to obtain $M_k$, and this gives a linear relationship mediated by a non-singular triangular matrix. Thus we have a one-to one correspondence
\begin{equation}\label{cM}
 (M_0,M_1,\dots,M_n) \leftrightarrow(c_0,c_1, \dots, c_n).
\end{equation}
The relations between the $c_j$ and $f$ are obtained by reading off,
from (\ref{qiRS1}) with $\bar{f}$ replaced by $f^*$,  the Laurent expansion of $f^*f'$ at the origin:
\begin{equation}\label{fstarfprime}
f^*(\zeta)f'(\zeta)= \sum_{k=0}^{n} \frac{k!c_k}{\zeta^{k+1}} +\text{ holomorphic in }\D.
\end{equation}
This is to be combined with (\ref{cM}). We see that the information about moments are now encoded in local information of $f$ at the origin and infinity.

If $f'$ has zeros $\omega_1,\dots, \omega_m$ in $\D$ then (\ref{fstarfprime}) means that $f^*$ is allowed to have poles at these points, in 
addition to the necessary pole of order $n$ at the origin, which is implicit in (\ref{fstarfprime}) since $f'(0)\ne 0$. We may now start counting parameters. Taking into account the normalization
at the origin, $f$ has from start $(1+2m+2n)+2m$ real parameters (numerator plus denominator when writing $f$ as a quotient).   These shall be matched with the $1+2n$
parameters in the $M_k$ or $c_k$. Next, each pole of $f^*$ in $\D\setminus \{0\}$ has to be a zero of $f'$, which give $2m$ equations for the parameters.
Now there remain $2m$ free parameters, and we claim that these can be taken to be the locations of the branch points, namely
\begin{equation}\label{B}
B_j=f(\omega_j) \quad j=1,\dots, m.
\end{equation}
Thus we expect that $f$ can be parametrized by the $M_k$ and the $B_j$:
\begin{equation}\label{fMB}
f(\zeta)=f(\zeta; M_0, \dots, M_n; B_1,\dots, B_m).
\end{equation}
In particular, $\partial f/\partial M_0$ then makes sense, with the understanding that $B_1,\dots, B_m$, as well as $M_1,\dots,M_n$, 
are kept fixed under the derivation.

Clearly the parameters $M_k$ and $B_j$ depend smoothly on $f$. This dependence can be made explicit by obvious residue formulas:
\begin{align}\label{MB}
M_k&=\frac{1}{2\pi \I} \int_{\partial\D} f(\zeta)^k f^*(\zeta) f'(\zeta) d\zeta= \res_{\zeta=0} f(\zeta)^k f^*(\zeta) f'(\zeta) d\zeta,\\
B_j&=\frac{1}{2\pi \I} \oint_{|\zeta-\omega_j|=\varepsilon} \frac{f(\zeta) f''(\zeta)}{f'(\zeta)} d\zeta= \res_{\zeta=\omega_j}  \frac{f(\zeta) f''(\zeta)}{f'(\zeta)} d\zeta.
\label{MB2}\end{align}
Thus the branch points $B_1,\dots, B_m$ are exactly the residues of $ \frac{f(\zeta) f''(\zeta)}{f'(\zeta)} d\zeta$ in $\D$.

\begin{theorem}\label{thm:QRS} 
Consider functions $f$ which are analytic in a neighborhood of the closed unit disk, are normalized by $f(0)=0$, $f'(0)>0$ and satisfy $f'\ne 0$ on $\partial\D$. 
Let $\omega_1,\dots,\omega_m$ denote the zeros of $f'$ in $\D$, these zeros assumed to be simple.

Under these assumptions a  quadrature identity  of the kind (\ref{qiRS}) holds, for some choice of coefficients $c_0, c_1,\dots, c_n$ with $c_n\ne 0$,
if and only if $f$ is a rational function such that $f$ has a pole 
of order $n+1$ at infinity and possibly finite poles at the reflected points $1/\bar{\omega}_k$ of the zeros of $f'$ in $\D$. 
This means that $f$ is of the form
\begin{equation}\label{faomega}
f(\zeta)=\frac{a_0\zeta+a_1 \zeta^2+\dots+ a_{m+n}\zeta^{m+n+1}}{(1-\bar{\omega}_1\zeta)\dots (1-\bar{\omega}_m\zeta)}.
\end{equation}
Here $a_0,\dots, a_{m+n+1}$ may be viewed as free parameters (local coordinates for $f$), subject only to $a_0>0$, 
with the roots $\omega_1, \dots, \omega_m$ then being determined by  the conditions $f'(\omega_k)=0$.

Another set of local coordinates for the same space of functions are the harmonic moments $M_0,M_1,\dots, M_n$  together with the branch points $B_1, \dots,B_m$,
defined by (\ref{B}) or (\ref{MB2}).  Thus we can write $f$ (locally) on the form (\ref{fMB}),
with the particular consequence that the partial derivative $\partial f/\partial M_0$ and the Poisson bracket (\ref{poisson}) make sense. Finally,  the string equation
$$
\{f,f^*\}=1
$$
holds.
\end{theorem}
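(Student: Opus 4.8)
The plan is to reuse the strategy of the polynomial case (Theorem~\ref{thm:polynomialstring}): read the string equation as the assertion $\{f,f^*\}_t = 1$ for the distinguished evolution in which $t=M_0$, i.e. $\dot M_0=1$, $\dot M_1=\dots=\dot M_n=0$ and $\dot B_1=\dots=\dot B_m=0$ (which exists and is unique by the local-coordinate parametrization (\ref{fMB}) already established), and then show that the resulting rational function of $\zeta$ reduces to the constant $1$. Since $f$ is rational of the form (\ref{faomega}), both $f$ and $f^*$ together with their $\zeta$- and $M_0$-derivatives are rational, so by Definition (\ref{poisson}) the bracket $\{f,f^*\}_t$ is rational, with possible poles only at $0$, at the zeros $\omega_1,\dots,\omega_m$ of $f'$ in $\D$, at their reflections $1/\bar\omega_j$, and at $\infty$. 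First I would record two vanishing facts that drive everything: differentiating the normalization $f(0)=0$ gives $\partial f/\partial M_0(0)=0$, and differentiating $B_j=f(\omega_j)$ at fixed $B_j$, combined with $f'(\omega_j)=0$, gives $\partial f/\partial M_0(\omega_j)=0$.

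The main obstacle, and the genuinely new point compared with the polynomial case (where $f^*$ had a pole only at the origin), is the cancellation of poles at the interior branch points $\omega_j$. Here $f^*$ has a simple pole at each $\omega_j$ (the reflection of the pole of $f$ at $1/\bar\omega_j$) while $f'$ has a simple zero there, so the two terms of (\ref{poisson}) involve $(f^*)'$ with a double pole and $\partial f^*/\partial M_0$ with a double pole coming from the moving location $\omega_j=\omega_j(M_0)$. I would expand both terms to order $(\zeta-\omega_j)^{-1}$ and verify that the residues cancel; the inputs are $\partial f/\partial M_0(\omega_j)=0$ (forcing $\partial f/\partial M_0$ to have a simple zero at $\omega_j$) together with the identity $(\partial f'/\partial M_0)(\omega_j)=-f''(\omega_j)\dot\omega_j$ obtained by differentiating $f'(\omega_j)=0$. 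A short residue computation then shows the two simple poles are negatives of one another, so $\{f,f^*\}_t$ is holomorphic at each $\omega_j$; by the reflection invariance of the Poisson bracket it is then holomorphic at each $1/\bar\omega_j$ as well.

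Next I would bound the remaining pole at the origin. Using $\partial f/\partial M_0(0)=0$, $f'(0)=a_0\ne 0$, and the fact that $f^*$ has a pole of order exactly $n+1$ at $0$ by (\ref{fstarfprime}) (since $c_n\ne 0$), a power-counting in the two terms of (\ref{poisson}) shows that $\{f,f^*\}_t$ has a pole of order at most $n$ at $0$, hence by reflection also at $\infty$. Thus $\{f,f^*\}_t=\sum_{i=-n}^{n}\gamma_i\zeta^i$ is a Laurent polynomial with $\gamma_{-i}=\bar\gamma_i$. Finally I would pin down the coefficients exactly as in the polynomial proof: Corollary~\ref{lem:momentvariation} with $h(z)=z^k$ gives $\dot M_k={\rm coeff}_0(f^k\{f,f^*\}_t)$ for $0\le k\le n$. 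Because $f^k=a_0^k\zeta^k+O(\zeta^{k+1})$ starts at order $k$, these relations form an upper-triangular linear system for $\gamma_{-1},\dots,\gamma_{-n}$ with nonzero diagonal entries $a_0^k$; the conditions $\dot M_1=\dots=\dot M_n=0$ force $\gamma_{-n}=\dots=\gamma_{-1}=0$ (solving from $k=n$ downward), while $k=0$ gives $\gamma_0=\dot M_0=1$. Reflection invariance then yields $\gamma_1=\dots=\gamma_n=0$, so $\{f,f^*\}_t\equiv 1$, which is the string equation. The only delicate step is the interior branch-point cancellation of the previous paragraph; the rest is the same bookkeeping as in Theorem~\ref{thm:polynomialstring}.
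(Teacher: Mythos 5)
Your proof is correct, but it takes a genuinely different route from the paper's. The paper does not compute the bracket directly: it invokes the existence theory for Hele-Shaw (Laplacian growth) evolutions on a fixed Riemann surface from \cite{Gustafsson-Lin-2014}. Namely, given $f(\cdot,0)$ there exists an evolution $t\mapsto f(\cdot,t)$ with branch points kept fixed satisfying $\{f,f^*\}_t=1$ (equivalently, the Polubarinova--Galin equation); this is obtained either by potential-theoretic methods (partial balayage/obstacle problems) or by solving $\dot{f}=\zeta f'(\zeta,t)P(\zeta,t)$ with $P$ the Poisson--Schwarz integral (\ref{poisson1}), the point being that fixing the branch points kills the extra rational terms $R$ of (\ref{Rsum}). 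Corollary~\ref{lem:momentvariation} then shows this evolution preserves $M_1,\dots,M_n$ and increases $M_0$ at unit rate, so it must coincide with the coordinate flow $\partial/\partial M_0$, whence the string equation. You argue in the opposite direction: you start from the coordinate flow itself (whose existence both you and the paper draw from the local-coordinate statement --- note the paper explicitly ``takes for granted'' that $(M_0,\dots,M_n;B_1,\dots,B_m)$ are independent coordinates, so you are on equal footing there) and verify by hand that its bracket is identically $1$, via (i) residue cancellation at the $\omega_j$, which is exactly where $\dot{B}_j=0$ enters, (ii) reflection invariance, and (iii) the triangular system coming from $\dot{M}_k=\delta_{k0}$. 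Your key step (i) checks out: writing $f^*(\zeta)=c_j/(\zeta-\omega_j)+\mathrm{hol.}$ near $\omega_j$, the residues of $\zeta f'\,\dot{f^*}$ and of $-\zeta (f^*)'\dot{f}$ are $\omega_j f''(\omega_j)c_j\dot{\omega}_j$ and $-\omega_j f''(\omega_j)c_j\dot{\omega}_j$, using $\dot{f}(\omega_j)=0$ and $(\dot{f})'(\omega_j)=-f''(\omega_j)\dot{\omega}_j$, so the bracket is indeed holomorphic there. What each approach buys: yours is self-contained and elementary, a true continuation of the polynomial proof of Theorem~\ref{thm:polynomialstring}, and it isolates the precise mechanism by which fixing the branch points makes the string equation hold; the paper's is shorter given the cited machinery, stays closer to the physical Hele-Shaw meaning of the $M_0$-flow, and the balayage route gives global weak solutions forward in time rather than only the local, infinitesimal statement.
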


\begin{proof}
The first statements, concerning the form of $f$, were proved already in the text preceding the theorem. 

As remarked before the proof, the parameters $M_0,M_1,\dots,M_n; B_1,\dots, B_m$ represent as many data as there are independent coefficients
in (\ref{faomega}), namely the $a_0,a_1, \dots,a_{m+n}$, and we take for granted that they are indeed independent coordinates. 
From that point on there is a straight-forward argument proving the theorem based on existence result for the Hele-Shaw flow moving boundary problem
(or Laplacian growth).

Indeed, it is known that there exists, given $f(\cdot,0)$, an evolution $t\mapsto f(\cdot,t)$ such that $M_1, \dots, M_n; B_1, \dots, B_m$ remain fixed under the evolution, and  
\begin{equation}\label{stringt}
\{f,f^*\}_t=1,
\end{equation}
holds. The branch points being kept fixed means that the evolution is to take place on a fixed Riemann surface (which however has to be  extended during the evolution),
and then at least a weak solution forward in time ($t \geq 0$) can be guaranteed by potential theoretic methods (partial balayage or obstacle problems). 
Under the present assumptions, involving only rational functions, local solutions in both time directions can also be obtained by direct approaches
which reduce (\ref{stringt}) to finite dimensional dynamical systems, similar to those discussed in the polynomial case, Section~\ref{sec:polynomials}. 
Both these methods are developed in detail in \cite{Gustafsson-Lin-2014}.  Below we give some further details related to the direct  approach with rational functions.

From (\ref{stringt}) alone follows that the moments $M_1, \dots, M_n$ are preserved. This can be seen from
Corollary~\ref{lem:momentvariation}, which gives
\begin{equation}\label{dMdt}
\frac{d}{dt}M_k=\frac{1}{2\pi\I}\frac{d}{dt} \int_\D f(\zeta,t)^k|f(\zeta,t)|^2 d\bar{\zeta}d\zeta =
\end{equation}
$$
=\frac{1}{2\pi}\int_{0}^{2\pi}  f(\zeta,t)^k \{f,f^*\}_t \,d\theta
=\frac{1}{2\pi}\int_0^{2\pi} f^k d\theta =0
$$
for $k\geq 1$.
The equation (\ref{stringt}) is equivalent to 
$2{\rm Re\,} [\dot{f}(\zeta, t)\,\overline{\zeta f'(\zeta,t)} ]=1$ holding for  $\zeta\in\partial\D$, essentially the Polubarinova-Galin equation (\ref{PG}), 
and on dividing by $|f'(\zeta,t)|^2$ this becomes
\begin{equation}\label{PG1}
{\rm Re\,} \frac{\dot{f}(\zeta,t)}{\zeta f'(\zeta,t)} =\frac{1}{2|f'(\zeta,t)|^2}, \quad \zeta\in\partial\D.
\end{equation}
If  $f'$ had no zeros  in $\D$, then (\ref{PG1}) would give that
\begin{equation}\label{lk}
\dot{f}(\zeta,t)=\zeta f'(\zeta,t)P(\zeta,t)
\quad(\zeta\in\mathbb{D}),
\end{equation}
where $P(\zeta,t)$ is the Poisson-Schwarz integral
\begin{equation}\label{poisson1}
P(\zeta,t) = \frac{1}{2\pi }\int_{0}^{2\pi}\frac{1}{2|f'(e^{i\theta},t)|^2}\,\frac{e^{i\theta}+\zeta}{e^{i\theta}-\zeta}\,d\theta.
\end{equation}

With zeros of $f'$ allowed in $\D$, one may add to $P(\zeta,t)$
rational functions which are purely imaginary on $\partial\D$ and whose poles in $\D$
are killed by the zeros of the factor $\zeta f'(\zeta,t)$ in front. The result is 
\begin{equation}\label{lkgen}
\dot{f}(\zeta,t)=\zeta f'(\zeta,t)\left(P(\zeta,t)+R(\zeta,t)\right),
\end{equation}
where  $R(\zeta,t)$ is any function of the form
\begin{equation}\label{Rsum}
R(\zeta,t)=\I\im\sum_{j=1}^m\frac{b_j(t)}{\omega_j(t)}
+\sum_{j=1}^m\left(  \frac{b_{j}(t)}{\zeta -\omega_j(t)}-
\frac{\bar{b}_{j}(t)\zeta}{1 -\bar{\omega}_j(t)\zeta}\right).
\end{equation}
The first term here is just to ensure normalization, namely $\im R(0,t)=0$.
Thus (\ref{lkgen}), together with (\ref{poisson1}) and (\ref{Rsum}), is equivalent to (\ref{stringt}). 

The coefficients $b_j\in\C$ in (\ref{Rsum}) are arbitrary, and they are actually proportional to the speed of the branch points under variation of $t$:
using (\ref{lkgen}) and (\ref{Rsum}) we see that
$$
\frac{d}{dt}B_j= f'(\omega_j,t)\dot{\omega}_j+ \dot{f}(\omega_j,t)=\dot{f}(\omega_j,t)
=\omega_j b_{j} f''(\omega_j,t).
$$
Here $f''(\omega_j)\ne 0$ since we assumed that the zeros are simple.

Thus the term $R(\zeta,t)$ represents motions of the branch points. However, we are interested in the 
case that the branch points do not move, since the interpretation of $\partial/\partial M_0$ in the string equation
amounts to all other variables $M_1, \dots,M_n; B_1,\dots, B_m$  being kept fixed. Thus we have only the equation
(\ref{lk}) to deal with, and as mentioned at least local existence of solutions of this can be guaranteed by potential theoretic or 
complex analytic methods, see \cite{Gustafsson-Lin-2014}.

\end{proof}



\bibliography{bibliography_gbjorn.bib}

\end{document}